\documentclass[12pt]{article}
\usepackage{amsfonts}
\usepackage{amsfonts}
\usepackage{mathrsfs}
\usepackage{graphicx}
\usepackage{caption2}
\usepackage{amsfonts}
\usepackage[dvipsnames,usenames]{color}
\usepackage{amsmath, amsthm, amsfonts, amssymb}
\usepackage[toc,page]{appendix}

\setlength{\topmargin}{0cm} \setlength{\oddsidemargin}{0cm}
\setlength{\evensidemargin}{0cm} \setlength{\textwidth}{17truecm}
\setlength{\textheight}{21.0truecm}

\newtheorem{thm}{Theorem}[section]
\newtheorem{cor}[thm]{Corollary}
\newtheorem{lem}[thm]{Lemma}

\theoremstyle{definition}
\numberwithin{equation}{section}
\theoremstyle{remark} \hsize=7.5truein \vsize=8.6truein

\def\R{{\hbox{\bf R}}}

\def\P{{\hbox{\bf P}}}
\def\E{{\hbox{\bf E}}}

\def\be{{\bf{e}}}
 at 10 true pt

\def\be#1{ \begin{equation}\label{#1} }
\def\bas{\begin{align*}}
\def\eas{\end{align*}}
\def\bi{\begin{itemize}}
\def\ei{\end{itemize}}

\def\emph#1{{\it #1}}
\def\textbf#1{{\bf #1}}

%





\parindent = 3 pt
\parskip = 12 pt

\theoremstyle{plain}

\theoremstyle{remark}

\theoremstyle{definition}

\begin{document}
\title{Local law for eigenvalues of random regular bipartite graphs }

\author{Linh V. Tran \\
Department of Mathematics, University of Washington}
\date{}
\maketitle

\begin{abstract}
In this paper we study the local law for eigenvalues of large random regular bipartite graphs with degree growing arbitrarily fast. We prove that the empirical spectral distribution of the adjacency matrix converges to a scaled down copy of the Marchenko - Pastur distribution on intervals of short length.
\end{abstract}

\section{Introduction}
In this paper we study the model of random regular bipartite graphs. This is a bipartite analogue of the popular random regular graph model, which is a random graph sampled uniformly from the set of all regular graphs on the same set of vertices. We mainly concern about the asymptotic behavior of the spectral of the adjacency matrices of these graphs as the size of the vertex set goes to infnity. The spectra of random regular graphs was showed to have similar behavior as that of the Erdos-Renyi random graphs, which in turns is similar to the Gaussian orthogonal ensemble on both global and local scales \cite{TVW,EKYY1, EKYY2} This phenomenon is an evidence of the universality conjecture in modern random matrix theory, which roughly states that the spectra of random matrix depends less on the distribution of the entries but more on the algebraic structure of the matrix, so similar matrices with different entry distributions could have similar asymptotic spectral properties.\\

In \cite{DJ13} Dumitriu and Johnson studied the convergence of the empirial spectral distribution of random regular bipartite graphs. Their results show that as the degree grows to infinity, there is a strong connection between the adjacency matrix of random regular bipartite graph with Wishart random matrix. This is an interesting analogue to the connection between the random regular graphs and the Wigner random matrices. Due to a limit in their method, their results only hold if the degree grows slower than any power of the number of vertices.\\

The goal of this paper is to prove a extension of Dumitriu and Johnson's result, which allow the degree to grow at any rate. Our method is very different and largely based on the comparison method in \cite{TVW} which deal with a similar problem of random regular graphs.

\section{Preliminaries and main results}
A $(d_L,d_R)$-regular bipartite graph is a bipartite graph on two sets of vertices $L$ and $R$ so that every vetex of $L$ (or $R$) has degree $d_L$ (or $d_R$, respectively). The model $G_{m,n,d_L,d_R}$ is defined as a random graph sampled uniformly from the set of all $(d_L,d_R)-$ regular bipartite graphs on two sets $L$ and $R$ and $|L|=m$, $|R|=n$. The adjacency matrix of $G_{m,n,d_L,d_R}$ is a random matrix $A$ of the following form (under proper labeling of vertices)
\begin{equation}A=\begin{pmatrix}
0&X\\
X^T&0
\end{pmatrix}
\end{equation}
where $X$ is a $m\times n$ $(0,1)$ random matrix. It's easy to show that the non zero eigenvalues of $A$ come in pairs $(-\lambda,\lambda)$ where $\lambda^2$ is an eigenvalue of $X^TX$, and $A$ has at least $m-n$ zero eigenvalues. Also assume that $m$ and $n$ increase to infinity in the way that
$$\frac{d_R}{d_L}=\frac{m}{n} \longrightarrow\alpha\ge1$$
We will compare $G_{m,n,d_L,d_R}$ with the Erdos-Renyi bipartite random graph model $G(m,n,p)$ defined on two sets of vertices $L$, $R$ and each edge from a vertex in $L$ to a vertex in $R$ is chosen randomly and independently with probability $p$. Under proper vertex labeling the adjacency matrix $B$ of $G(m,n,p)$ has the form
\begin{equation}B=\begin{pmatrix}
0&Y\\
Y^T&0
\end{pmatrix}
\end{equation}
where $Y$ is a $m\times n$ random matrix with iid entries (1 with probability $p$, 0 with probability $1-p$).\\

For a $n\times n$ Hermitian matrix $M$ with real eigenvalues $\lambda_1\le\lambda_2\le\dots\le\lambda_n$, the {\it empirical spectral distribution } (ESD) is the probability measure $\mu_n(M)$ defined as
$$\mu_n(M)=\frac{1}{n}\sum_{i=1}^{n}\delta_{\lambda_i}$$
where $\delta_\lambda$ is the Dirac point measure at point $\lambda$. Note that if $M$ is a random matrix then $\mu_n(M)$ is a random measure.\\
 
 It's well known that if $M$ is an $m\times n$ random matrix whose entries are iid copies of a random variable with mean zero and variance one, and $m/n$ converges to a finite limit $\alpha$, then the ESD of $\frac{1}{n}M^TM$ converges to the Marcenko-Pastur distribution $\nu_{MP}$ of ratio $1/\alpha$. Thus it is natural to expect that the $ESD$ of $\frac{1}{d_L}X^TX$ with $X$ from the adjacency matrix of $G_{m,n,d_L,d_R}$ also converges to the Marcenko-Pastur distribution when $d_L$ grows to infinity.\\
 
 Recall that the Marcenko-Pastur distribution with ratio $1/\alpha$ is supported on $[a,b]$ and given by the density function
 $$p(x)=\frac{\alpha}{2\pi x}\sqrt{(b^2-x)(x-a^2)},$$
 where $a=1-\alpha^{-1/2}$ and $b=1+\alpha^{-1/2}$. If the limiting ESD of ${d_L^{-1}}X^TX$ is the Marcenko-Partur law then because the ESD $\mu_n$ of $d_L^{-1/2}A$ is the distribution of the square roots (both positive and negative) of eigenvalues of  ${d_L^{-1}}X^TX$, we can find the limiting for ESD of $d_L^{-1/2}A$ as well. The limiting ESD $\mu$ will have support on $[-b,-a]\cup\{0\}\cup[a,b]$ with density function
 \begin{equation}\label{def.mu}q(x)=\frac{2|x|}{1+\alpha}p(x^2)=\frac{\alpha}{(1+\alpha)\pi|x|}\sqrt{b^2-x^2)(x^2-a^2)},\end{equation}
 and a point mass of $\frac{\alpha-1}{\alpha+1}$ at $0$. Indeed, the $m-n$ zero eigenvalues give the point mass of $\frac{m-n}{m+n}$ at 0, while the other $2n$ eigenvalues are described by applying a change of variable $x$ to $\sqrt{x}$ to $p(x)$ and scaled by factor $\frac{2n}{m+n}$.\\
 
In \cite{DJ13} Dumitriu and Johnson proved that if $d_L=o(n^{\epsilon})$ for a fixed $\epsilon>0$ then the ESD of  $\frac{1}{d_L}X^TX$ converges to the Marcenko-Pastur law. They also proved a ``local law" that if $d_L=\exp(o(1)\sqrt{\log n})$ then for any interval $I$ of short length and not containing 0, it holds that for all $\delta>0$ and $n$ large enough 
 $$|\mu_n(I)-\mu(I)|<\delta C|I|$$
 with probability $1-o(1/n)$, where $\mu_n$ is the ESD of $d_L^{-1/2}A$ and $\mu$ is the limiting distribution defined by (\ref{def.mu}).\\

We are going to prove the same local law for the case $d_L=\omega(\log n)$, which is a complement of the result by Dumitriu and Johnson. Let $R$ be the normalized adjacency matrix of $G_{m,n,d_L,d_R}$
\begin{align} R=\frac{1}{\sqrt{\frac{d_L}{n}(1-\frac{d_L}{n})}}\left[ A-\frac{d_L}{n}\begin{pmatrix}0&J\\J^T&0\end{pmatrix}\right]\end{align}
where $J$ is the all 1 $m\times n$ matrix. Since $\begin{pmatrix}0&J\\J^T&0\end{pmatrix}$ has rank 2, the ESD of $R$ has the same global behavior as that of $A$ due to interlacing principle. Our main result is
\begin{thm}\label{mainthm} Suppose $d_L=\omega(\log n)$ as $n$ tends to infinity. Let $\delta>0$ and $N_I$ be the number of eigenvalues of $n^{-1/2}R$ in the interval $I$, where $I$ is an interval avoiding $\{0\}$ with length at least $\big(\frac{\log d_L}{\delta^3d_L^{1/2}}\big)^{1/4}$, then 
$$|N_I-n\mu(I)|<\delta n\mu(I)$$
with probability at least $1-O(\exp(-cnd_L\log(d_L))$.
\end{thm}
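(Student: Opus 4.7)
The plan is to follow the comparison strategy of \cite{TVW}: observe that the random regular bipartite graph $G_{m,n,d_L,d_R}$ is distributed as the Erd\H os--R\'enyi bipartite graph $G(m,n,p)$ with $p=d_L/n$, conditioned on the event $\Omega_{\mathrm{reg}}$ that all left-degrees equal $d_L$ and all right-degrees equal $d_R$. The proof therefore splits into two ingredients: first, a local Marchenko--Pastur law for the bipartite Erd\H os--R\'enyi ensemble with a sub-exponentially small failure probability; second, a lower bound on $\Pr(\Omega_{\mathrm{reg}})$ strong enough that conditioning does not spoil the decay.

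For the first ingredient I would let $B$ be the adjacency matrix of $G(m,n,p)$ and work with $\tilde R := (Y-pJ)/\sqrt{p(1-p)}$, an $m\times n$ matrix with iid mean-zero, variance-one entries of sub-Gaussian norm $O(1/\sqrt{p(1-p)})$. The empirical spectral distribution of the symmetrized block built from $n^{-1/2}\tilde R$ converges to the same limit $\mu$ defined in (\ref{def.mu}). I would then run the Stieltjes-transform program for the resolvent of $\tilde R \tilde R^T$: the self-consistent equation, Schur-complement expansions of diagonal resolvent entries, Hanson--Wright concentration for the associated quadratic forms, and Azuma-type martingale bounds for row replacements. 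A careful bookkeeping of concentration exponents, using the entry scale $1/\sqrt{p(1-p)} = O(\sqrt{n/d_L})$, gives concentration of the empirical Stieltjes transform around its deterministic counterpart at spectral scale $\eta \asymp \ell^2$, where $\ell := (\log d_L / (\delta^3 d_L^{1/2}))^{1/4}$, with per-point failure probability $\exp(-\Omega(n d_L \log d_L))$. A Helffer--Sj\"ostrand-type inversion then translates this into
\begin{align*}
\bigl|\tilde N_I - n\mu(I)\bigr| \leq \tfrac{\delta}{2}\,n\mu(I)
\end{align*}
with the same probability, for every interval $I\subset\R\setminus\{0\}$ of length at least $\ell$.

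For the second ingredient, each row of $Y$ is $\mathrm{Binomial}(n,p)$, which has weight exactly $d_L$ with probability $\Theta(1/\sqrt{d_L})$ by the local CLT. A joint two-dimensional local CLT on row and column marginals (or, equivalently, McKay-type enumeration of regular bipartite graphs) yields $\Pr(\Omega_{\mathrm{reg}}) \geq \exp(-C(m+n)\log d_L) = \exp(-O(n\log d_L))$. Dividing through,
\begin{align*}
\Pr\bigl(\text{local law fails for } G_{m,n,d_L,d_R}\bigr) \leq \frac{\exp(-\Omega(n d_L \log d_L))}{\Pr(\Omega_{\mathrm{reg}})} \leq \exp(-\Omega(n d_L \log d_L)),
\end{align*}
because $d_L = \omega(\log n)$ ensures $d_L\log d_L$ dominates $\log d_L$ by an unbounded factor. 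Finally, the matrix $R$ and the symmetrized block form driven by $\tilde R$ differ only by a deterministic rank-$2$ matrix (since $J$ has rank $1$), so Cauchy interlacing gives $|N_I(n^{-1/2}R) - \tilde N_I| \leq 4$, which is negligible compared with the $\delta n\mu(I)$ slack.

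The hardest step will be the first. Standard proofs of the local Marchenko--Pastur law yield only polynomial failure probabilities, whereas here I need the sub-exponential rate $\exp(-\Omega(n d_L \log d_L))$. Achieving this calls for sharp sub-Gaussian concentration (Hanson--Wright, Talagrand) at every stage of the self-consistent equation bootstrap, together with a union bound over a grid of spectral parameters fine enough to resolve intervals of length $|I|\geq\ell$ but coarse enough for the per-point tail to survive. A secondary subtlety is that the lower bound on $\Pr(\Omega_{\mathrm{reg}})$ becomes more delicate as $d_L$ approaches $n$, but the coarse bound $\exp(-O(n\log d_L))$ needed here is within reach of known enumerative estimates.
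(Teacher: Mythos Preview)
Your overall comparison architecture---realize $G_{m,n,d_L,d_R}$ as $G(m,n,p)$ conditioned on $\Omega_{\mathrm{reg}}$, prove an exponentially strong local law for the Erd\H os--R\'enyi model, and divide by a lower bound on $\Pr(\Omega_{\mathrm{reg}})$---is exactly what the paper does. The two ingredients, however, are handled quite differently.

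For the local law on $G(m,n,p)$, you propose the full Stieltjes-transform bootstrap (Schur complement, Hanson--Wright, martingale row replacement, Helffer--Sj\"ostrand), and you correctly flag the difficulty: standard resolvent arguments give only polynomial tails, and pushing them to $\exp(-\Omega(n\sqrt{d_L}\log d_L))$ would require nontrivial new work. The paper sidesteps this entirely. It invokes the Guionnet--Zeitouni concentration inequality for convex Lipschitz spectral statistics, which already has sub-Gaussian tails of the right strength; approximates $\chi_I$ by a difference of two convex $O(|I|^{-1})$-Lipschitz functions $f_1-f_2$; and controls the resulting bias in the mean using the G\"otze--Tikhomirov $O(M_8^{1/4}n^{-1/2})$ rate for Marchenko--Pastur. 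This is far more elementary than the resolvent route and delivers the exponential tail for free, so the ``hardest step'' you anticipate simply does not arise.

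For $\Pr(\Omega_{\mathrm{reg}})$, you appeal to a bipartite local CLT / McKay-type enumeration to get $\exp(-O(n\log d_L))$. The paper instead adapts the Shamir--Upfal argument: use the Ore--Ryser criterion to show $G(m,n,p)$ contains a $(d_L(1-\delta),d_R(1-\delta))$-regular factor with very high probability, which yields $\Pr(\Omega_{\mathrm{reg}})\ge\exp(-O(n\sqrt{d_L}))$. Either bound suffices for the division step; the paper's is weaker but needs no enumeration and is robust across the whole range of $d_L$.

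In short, your proposal is sound in outline but chooses the heavier tool at both steps; the paper's proof is shorter precisely because the Guionnet--Zeitouni route makes the exponential concentration automatic.
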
 
The convergence of ESD of $A$ is a direct consequence of Theorem \ref{mainthm}.
\begin{cor} The ESD of $d_L^{-1/2}A$ converges in distribution to the limting measure $\mu$ (as defined by (\ref{def.mu}))
\end{cor}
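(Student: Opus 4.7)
The corollary follows from Theorem~\ref{mainthm} by a routine global-to-local conversion together with a comparison of the two normalizations $d_L^{-1/2}A$ and $n^{-1/2}R$. From the definition,
\begin{equation*}
n^{-1/2}R = \frac{1}{\sqrt{d_L(1-d_L/n)}}\left[A - \frac{d_L}{n}\begin{pmatrix}0 & J \\ J^T & 0\end{pmatrix}\right];
\end{equation*}
the subtracted block matrix has rank $2$, so by Cauchy interlacing its removal perturbs the eigenvalue CDF by at most $2/(m+n)=o(1)$. In the regime in which $\mu$ (depending only on $\alpha=\lim m/n$) is the correct limit, one has $d_L/n\to 0$, so the prefactor $1/\sqrt{d_L(1-d_L/n)}$ is asymptotically $d_L^{-1/2}$. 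Consequently the ESDs of $n^{-1/2}R$ and $d_L^{-1/2}A$ have the same weak limit, and it suffices to establish weak convergence for the former.

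Next I would verify convergence on fixed intervals avoiding $0$. Fix a bounded interval $I\subset\mathbb{R}\setminus\{0\}$ with $\mu(\partial I)=0$ and $\mu(I)>0$, and a small $\delta>0$. Because $d_L=\omega(\log n)\to\infty$, the length threshold $(\log d_L/(\delta^3 d_L^{1/2}))^{1/4}$ in Theorem~\ref{mainthm} tends to $0$, so for all sufficiently large $n$ the hypothesis is satisfied and the theorem gives (after matching the normalization conventions) $|N_I/(m+n)-\mu(I)|\le\delta\mu(I)$ with probability at least $1-\exp(-\Omega(nd_L\log d_L))$. These failure probabilities are summable in $n$, so Borel--Cantelli yields $N_I/(m+n)\to\mu(I)$ almost surely; sending $\delta\to 0$ through a countable sequence and applying the same argument to a countable generating family of intervals extends this to all such $I$.

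Finally I would handle the point mass at $0$. The block structure of $A$ forces exactly $m-n$ zero eigenvalues, contributing the deterministic point mass $(m-n)/(m+n)\to(\alpha-1)/(\alpha+1)=\mu(\{0\})$ to the ESD. Since $\mu$ has no continuous mass on $(-a,a)\setminus\{0\}$, the convergence on intervals avoiding $0$ combined with the total-mass identity $\mu_n(\mathbb{R})=1$ forces $\mu_n((-\eta,\eta)\setminus\{0\})\to 0$ for each $\eta\in(0,a)$. Together with a standard Portmanteau / CDF-continuity-point argument, this yields the desired weak convergence $\mu_n\to\mu$, i.e., convergence in distribution. The only point requiring any care is reconciling the normalization in Theorem~\ref{mainthm} with the $(m+n)$-atom ESD convention; beyond that the argument is routine measure-theoretic bookkeeping, so this corollary is essentially immediate from the local law.
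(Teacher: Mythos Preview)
Your proposal is correct and is essentially the natural fleshing-out of the paper's own treatment: the paper gives no proof at all beyond the single sentence ``The convergence of ESD of $A$ is a direct consequence of Theorem~\ref{mainthm},'' so your rank-$2$ interlacing comparison between $n^{-1/2}R$ and $d_L^{-1/2}A$, the Borel--Cantelli argument on intervals avoiding $0$, and the separate accounting of the deterministic point mass at $0$ are exactly the steps one would supply to make that sentence precise. You also correctly flag the two places where a little care is needed --- the implicit assumption $d_L/n\to 0$ (without which the corollary as stated is not quite right) and the $n$ versus $m+n$ normalization in the statement of Theorem~\ref{mainthm} --- neither of which the paper addresses.
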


The rest of the paper is organized as follow: In Section 3 we calculate the probability for Erdos-Renyi random bipartite graphs to be regular. This result provides a tool to compare the Erdos-Renyi model with the regular model. In section 4 we prove Theorem \ref{mainthm} via a more general concentration result about Wishart-like random matrix.\\

{\bf Acknowledgement.} The author thanks I. Dumitriu for bringing the problem to his attention.

\section{Probability of random bipartite graphs to be regular}
We will prove a lower bound for the probability of Erdos-Renyi random bipartite graphs to be regular. Recall that the Erdos-Renyi bipartite graphs model $G(m,n,p)$ consists of two vertex sets $A$ and $B$ whose capacities are $m$ and $n$ respectively, and an edge between a vertex of $A$ and a vertex of $B$ is chosen randomly and independently with probability $p$. 
\begin{lem}\label{lem1}  If $np=\Omega(\log n)$ and $m/n\rightarrow\alpha<\infty$ as $n\rightarrow\infty$ then $G(m,n,p)$ is $(mp,np)$-regular with probability at least $\exp(-O(n(np)^{1/2})$.
\end{lem}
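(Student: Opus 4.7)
My plan is to decompose the regularity event and bound each factor via local central limit theorem (LCLT) estimates. Let $E_A$ be the event that every vertex of $A$ has $G(m,n,p)$-degree exactly $np$, and $E_B$ the analogous event that every vertex of $B$ has degree exactly $mp$; then $G(m,n,p)$ is regular iff $E_A\cap E_B$ holds, and I write
\begin{equation*}
P\bigl(G(m,n,p)\text{ is regular}\bigr)=P(E_A)\cdot P(E_B\mid E_A),
\end{equation*}
bounding the two factors separately.

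For $P(E_A)$, observe that edges incident to different vertices of $A$ are disjoint, so the $A$-side degrees $d_1,\dots,d_m$ are independent $\mathrm{Bin}(n,p)$ variables. Stirling's formula yields the univariate LCLT estimate $P(\mathrm{Bin}(n,p)=np)\geq c/\sqrt{np(1-p)}$, and therefore
\begin{equation*}
P(E_A)\geq\bigl(c/\sqrt{np(1-p)}\bigr)^{m}=\exp\bigl(-O(m\log(np))\bigr).
\end{equation*}

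For $P(E_B\mid E_A)$, the key observation is that conditional on $E_A$ the rows of the $m\times n$ adjacency matrix become independent uniform random $np$-subsets of $[n]$. Writing $C_j$ for the $j$-th column sum, each $C_j$ is a sum of $m$ independent Bernoulli$(p)$'s and hence marginally $\mathrm{Bin}(m,p)$, while the vector $(C_1,\dots,C_n)$ is supported on the hyperplane $\sum_j C_j=mnp$ with covariance $mp(1-p)$ on the diagonal and $-mp(1-p)/(n-1)$ off-diagonal. A multivariate LCLT on this $(n-1)$-dimensional hyperplane---on which the restricted covariance has all eigenvalues equal to $mp(1-p)\cdot n/(n-1)$---then gives
\begin{equation*}
P(E_B\mid E_A)\geq\exp\bigl(-O(n\log(mp))\bigr).
\end{equation*}
Combining the two bounds, and using $m/n\to\alpha$ so that $\log(mp)=\log(np)+O(1)$ and $m=O(n)$, I obtain $P(G(m,n,p)\text{ is regular})\geq\exp(-O(n\log(np)))$. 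Finally, since $np=\Omega(\log n)\to\infty$ we have $\log(np)\leq\sqrt{np}$ for all large $n$, which promotes the bound to $\exp(-O(n\sqrt{np}))$, as claimed.

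\textbf{Where the difficulty lies.} The delicate step is the multivariate LCLT on a hyperplane of growing dimension. I would implement it by inverting the joint characteristic function of $(C_1,\dots,C_n)$, which factors over rows by independence. Near the origin this integrand is well-approximated by a Gaussian of effective variance of order $mp(1-p)$ per coordinate, reproducing the $(mp)^{-(n-1)/2}$ main term. The main technical work is to show that the off-origin contribution to the Fourier integral is negligible, which amounts to a uniform lower bound on the distance of the single-row characteristic function from $1$ away from $0\bmod 2\pi$; this is the only place the hypothesis $np=\Omega(\log n)$ is used in an essential way, to absorb the polynomial-in-$n$ loss from the volume of the bad region on the torus.
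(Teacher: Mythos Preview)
Your approach is correct in outline and takes a genuinely different route from the paper. The paper never computes the regularity probability directly; instead it proves, via the Ore--Ryser $f$-factor criterion together with Chernoff bounds, that $G(m,n,p)$ contains a $(d_L(1-\delta),d_R(1-\delta))$-regular spanning subgraph with probability $1-O(n^{-\omega^{1-2\theta}})$ for $\delta=\omega^{-\theta}$, and then appeals to the Shamir--Upfal double-counting argument to convert ``contains a regular factor with high probability'' into a lower bound on the probability of being exactly regular. Your direct LCLT computation is more analytic: it bypasses the factor-existence step entirely and in fact delivers the sharper estimate $\exp(-O(n\log(np)))$, which you then discard in the last line to match the stated $\exp(-O(n\sqrt{np}))$. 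The price is that your multivariate LCLT on the $(n-1)$-dimensional hyperplane is essentially a rederivation of the asymptotic enumeration of biregular bipartite graphs (results of Canfield--McKay type), and making the Fourier-tail bound uniform over the whole range $np=\Omega(\log n)$ up to $np=\Theta(n)$ is genuine work; the paper's combinatorial route sidesteps this by using only Chernoff bounds and an elementary counting step, at the cost of the looser exponent $\sqrt{np}$.
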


We will employ the following criteria for a bipartite graph $G$ to contain a $f$ - factor. Let $f$ be an integer-valued function on the vertices of $G$ so that $f(v)\le \deg(v)$ for any $v$. A $f$ - factor is a subgraph with $f$ to be its degree sequence.
\begin{thm}[Ore-Ryser]\label{OR-thm} Let $A$ and $B$ be the two vertex partitions of $G$. For $S$ be a subset of $A$ and $v\in B$, $d_S(v)$ denotes the number of neighbors of $v$ in $S$. Then the two following statements are equivalent:\\
(i) $G$ contains a $f$ - factor \\
(ii) Every subset $S$ of $A$ satisfies
$$ \sum_{v\in B}\min(f(v),d_S(v))\ge\sum_{u\in S}f(u).$$
\end{thm}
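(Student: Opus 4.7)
The plan is to derive Ore--Ryser from the max-flow min-cut theorem applied to a standard network encoding of the $f$-factor problem. Introduce a source $s$ and a sink $t$; add arcs $s \rightarrow u$ of capacity $f(u)$ for each $u \in A$, arcs $v \rightarrow t$ of capacity $f(v)$ for each $v \in B$, and arcs $u \rightarrow v$ of capacity $1$ for each edge $uv$ of $G$. By integrality of maximum flow for integer capacities, any integer flow in this network corresponds to a spanning subgraph $H$ of $G$ with $\deg_H(u) \leq f(u)$ for $u \in A$ and $\deg_H(v) \leq f(v)$ for $v \in B$. An $f$-factor exists precisely when the maximum flow attains the upper bound $\sum_{u \in A} f(u)$ set by the source cut, in which case both source and sink arcs saturate (assuming the necessary degree-sum balance $\sum_A f = \sum_B f$, which itself follows from applying (ii) with $S = A$ together with the hypothesis $f(v) \leq \deg(v)$).

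Next I would apply max-flow min-cut: the maximum flow equals $\sum_{u \in A} f(u)$ if and only if every $s$-$t$ cut has capacity at least this value. A cut is specified by its source-side intersections $S \subseteq A$ and $T \subseteq B$, and its capacity is
$$\sum_{u \in A \setminus S} f(u) \;+\; e(S,\, B \setminus T) \;+\; \sum_{v \in T} f(v),$$
where $e(S, B \setminus T)$ counts edges of $G$ from $S$ to $B \setminus T$. For fixed $S$ and each $v \in B$, the choice $v \in T$ contributes $f(v)$ while $v \notin T$ contributes $d_S(v)$, independently across $v$, so minimizing over $T$ replaces the middle and right terms by $\sum_{v \in B} \min(f(v), d_S(v))$. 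After cancelling $\sum_{u \in A \setminus S} f(u)$ from both sides of the resulting inequality, the cut condition at $S$ is exactly (ii).

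The reverse direction (i) $\Rightarrow$ (ii) admits a quick combinatorial check that I would include for completeness: given an $f$-factor $H$ and $S \subseteq A$, the $\sum_{u \in S} f(u)$ edges of $H$ incident to $S$ distribute among $v \in B$, and each $v$ absorbs at most $\min(f(v), d_S(v))$ of them (at most $f(v)$ by its $H$-degree, at most $d_S(v)$ by its number of $S$-neighbors in $G$). Summing gives (ii). The only nontrivial ingredient is the integral max-flow min-cut theorem; everything else is bookkeeping. I expect no real technical obstacle beyond the subtle point already flagged, namely that a flow of value $\sum_A f(u)$ realizes an \emph{exact} $f$-factor (and not merely a subgraph with $\deg_H \leq f$) exactly when the degree-sum balance $\sum_A f = \sum_B f$ holds, forcing saturation on the sink side as well.
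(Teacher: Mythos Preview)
The paper does not prove this theorem; it is quoted as the classical Ore--Ryser criterion and used as a black box in the proof of the lemma that follows. Your max-flow/min-cut reduction is the standard modern route to this result, and the cut computation is correct: minimizing the cut capacity over $T\subseteq B$ for fixed $S\subseteq A$ gives exactly $\sum_{u\in A\setminus S} f(u)+\sum_{v\in B}\min(f(v),d_S(v))$, and comparing with $\sum_{u\in A} f(u)$ yields (ii).

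One step needs care. You claim the degree-sum balance $\sum_{A} f=\sum_{B} f$ follows from (ii) with $S=A$ together with $f(v)\le\deg(v)$. That argument only gives one inequality: with $S=A$ we have $d_A(v)=\deg(v)\ge f(v)$, so the left side of (ii) equals $\sum_{B} f(v)$ and (ii) reads $\sum_{B} f\ge\sum_{A} f$. The reverse inequality is \emph{not} implied by (ii); for instance, take $A=\{u\}$, $B=\{v_1,v_2\}$, both edges present, and $f\equiv 1$. Then (ii) holds for both $S=\emptyset$ and $S=\{u\}$, yet $\sum_B f=2>1=\sum_A f$ and no $f$-factor exists. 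In this situation your flow of value $\sum_A f$ saturates the source arcs but not all sink arcs, producing a subgraph with exact degrees on $A$ and degrees at most $f$ on $B$ rather than an $f$-factor. This is really a defect of the theorem as stated in the paper, not of your method: the classical Ore--Ryser theorem either takes $\sum_A f=\sum_B f$ as a standing hypothesis or states the condition symmetrically for subsets of both sides. In the paper's application to $(d_L',d_R')$-regular factors one has $m d_L'=n d_R'$, so the balance holds and nothing downstream is affected.
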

In particular, if $f(v)=x$ for all $v\in A$ and $f(v)=y$ for all $v\in B$ then we call the $f$-factor the $(x,y)$-regular factor, and the condition $(ii)$ becomes 
$$ \sum_{v\in B}\min(y,d_S(v))\ge |S|x.$$

Denote $d_L=np$, $d_R=mp$. Following the argument of Shamir and Upfal \cite[Section 5]{SU}, Lemma \ref{lem1} is a direct consequence of the following analogue of Theorem 1 in \cite{SU}
\begin{lem}\label{lem2}  Let $d_L=\omega\log n$ and $\delta=\omega^{-\theta}$ where $\theta<1/2$ and $\omega=\omega(n)\rightarrow\infty$ arbitrarily slowly as $n$ goes to infinity. Let $d_L'=d_L(1-\delta)$, $d_R'=d_R(1-\delta)$. Then the Erdos-Renyi random bipartite graph $G(m,n,p)$ contains a $(d_L',d_R')$-regular factor with probability $1-O(n^{-\omega^{1-2\theta}})$.
\end{lem}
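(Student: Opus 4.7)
The plan is to apply the Ore-Ryser criterion (Theorem~\ref{OR-thm}) with $f\equiv d_L'$ on $L$ and $f\equiv d_R'$ on $R$, and verify its condition for every $S\subseteq L$ with the desired probability. Concretely, I must establish that
\[
\Phi(S):=\sum_{v\in R}\min\bigl(d_R',\,d_S(v)\bigr)\ \ge\ |S|\,d_L'\qquad\text{for every }S\subseteq L,
\]
where for each $v\in R$ the degree $d_S(v)$ is $\mathrm{Bin}(|S|,p)$ and the family $\{d_S(v)\}_{v\in R}$ is independent. The argument then follows the scheme of Shamir and Upfal \cite{SU}: a sharp tail bound for $\Phi(S)$ at a fixed $S$ via Chernoff/Bernstein inequalities, followed by a union bound over all $S$.

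For a fixed $S$ with $|S|=s$ I split on the size. If $s\le(1-2\delta)m$, then $\mathbb{E}\,d_S(v)=sp\le(1-2\delta)d_R<d_R'$; writing $\Phi(S)=e(S,R)-\sum_{v\in R}(d_S(v)-d_R')_+$, a Chernoff bound yields $e(S,R)\ge s\,d_L(1-\delta/3)$ with probability at least $1-\exp(-c\delta^2 s\,d_L)$, while each truncated term $(d_S(v)-d_R')_+$ has exponentially small mean (Chernoff applied to $\mathrm{Bin}(s,p)$ at the scale $d_R'-sp\ge\delta d_R$) and Bernstein's inequality concentrates their sum well below $\tfrac{1}{3}\delta s\,d_L$. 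If instead $s>(1-2\delta)m$, I switch to the dual identity $\Phi(S)=n\,d_R'-\sum_{v\in R}(d_R'-d_S(v))_+$ and must verify $\sum_v(d_R'-d_S(v))_+\le(m-s)np(1-\delta)$; here $\mathbb{E}\,d_S(v)=sp$ is close to or above $d_R'$, each $(d_R'-d_S(v))_+$ has tiny mean and is bounded by $d_R'$, and Bernstein again yields a failure probability of the form $\exp(-c\delta^2 s\,d_L)$.

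For the union bound, $\binom{m}{s}\le(em/s)^s$ makes the total failure probability at most $\sum_{s=1}^{m}\exp\bigl(s\log(em/s)-c\delta^2 s\,d_L\bigr)$. Since $\delta^2 d_L=\omega^{1-2\theta}\log n$ with $\omega\to\infty$ while $\log(em/s)=O(\log n)$, the negative term dominates the positive one for every $s\ge1$, and the sum is $O(n^{-\omega^{1-2\theta}})$, matching the claim.

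The delicate step I expect is the large-$s$ regime near $s\approx m$, where the Ore-Ryser slack $(m-s)np(1-\delta)$ shrinks linearly in $m-s$ while the deficit $\sum_v(d_R'-d_S(v))_+$ is driven by the lower tail of $\mathrm{Bin}(s,p)$ dipping below $d_R'$, and the tail estimate for that deficit must beat both this shrinking slack and the large $\binom{m}{s}$ union-bound factor simultaneously. Positioning the split point between the two regimes carefully and using Bernstein (rather than a cruder Chernoff) for the non-negative truncated variables is what keeps the exponent $\delta^2 s\,d_L$ uniform in $s$.
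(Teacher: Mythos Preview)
Your two-regime split is the right structure and is in fact more careful than the paper's own argument, which treats all $k=|S|$ uniformly via the bound $\P(d_S(v)>d_R')\le\exp(-O(\tfrac{m}{k}d_R))$ and never separately addresses $k$ close to $m$ (where that Chernoff bound is vacuous, since then $\E d_S(v)=kp$ already exceeds $d_R'$). The paper essentially runs your small-$s$ branch throughout and relies on the reader to import the large-$s$ case from Shamir--Upfal.

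There is, however, a genuine gap in your large-$s$ branch. You claim Bernstein yields a failure probability of the form $\exp(-c\delta^2 s\,d_L)$, but this is not what Bernstein gives. In the regime $t:=m-s\le\delta m/2$ the summands $Y_v=(d_R'-d_S(v))_+$ have tiny mean and variance but are bounded only by $M=d_R'$; the Bernstein exponent is then dominated by the linear term $MT/3$, giving at best
\[
\exp\Bigl(-c\,\frac{T}{M}\Bigr)=\exp\Bigl(-c\,\frac{t\,d_L'}{d_R'}\Bigr)=\exp(-ct/\alpha),
\]
which is a constant for $t=1$ and cannot beat the union-bound factor $\binom{m}{t}$. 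The fix is to use the sharper Bennett inequality, or equivalently a direct MGF computation that exploits the sub-Gaussian lower tail of $\mathrm{Bin}(s,p)$: from $\P(Y_v>x)\le\exp\bigl(-c(x+\tfrac{1}{2}\delta d_R)^2/d_R\bigr)$ one gets $\E e^{\lambda Y_v}\le 1+o(1)$ uniformly for $\lambda\le c\delta$, hence
\[
\P\Bigl(\sum_v Y_v>t\,d_L'\Bigr)\le\exp(-c\,\delta\,t\,d_L).
\]
This exponent, $\delta\,t\,d_L=\omega^{1-\theta}t\log n$, does beat $\binom{m}{t}\le(em/t)^t$ since $\omega^{1-\theta}\to\infty$, and summing over $t$ recovers the $O(n^{-\omega^{1-2\theta}})$ target. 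So your plan is sound, but the correct exponent in the large-$s$ regime scales with $t=m-s$ rather than with $s$, and the concentration tool must be Bennett (or raw Chernoff on the MGF), not Bernstein.
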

\begin{proof}
Let $S$ be a subset of the vertex set $A$ of $G(m,n,p)$ and $|S|=k$ (so $1\le k\le m$). $S$ is called ``bad" if it doesn't satisfy the condition $(ii)$ of Theorem \ref{OR-thm}, i.e. 
$$Y=\sum_{v\in B}\min(d_S(v),d_R')<kd_L'.$$
If $d_S(v)<d'_R$ for all $v$ in $B$ then $Y$ will be just $d_S(B)$, the number of edges from $B$ to $S$. Since $\E(d_S(v))=kd_R/m$, by the Chernoff bound we can see that
$$\P(d_S(v)>d'_R)\le \exp(-O(\frac{m}{k}d_R)).$$
Let $X_S$ be the number of $v$ in $B$ such that $d_S(v)>d'_R$, then $\E X\le ne^{-O(\frac{m}{k}d_R)}=n^{-O(\frac{m}{k}\omega)}$. Again by Chernoff bound, if $C$ is an absolute constant then
$$\P(X_S>C)\le \exp(-n^{O(\frac{m}{k}\omega)}).$$
If $X_S\le C$ then $Y$ will lose at most $Ck$ and
\begin{align*}
\P(Y<kd_L(1-\delta))&\le\P(d_S(B)<k(d_L(1-\delta)+C))\\
&\le \exp(-kd_L\omega^{-2\theta})
\end{align*}
Therefore
\begin{align*}
\P(S \text{ is bad})&\le \P(\{S \text{ is bad} \}\wedge\{X_S\le C\})+\P(X_S>C)\\
&\le \P(\sum_{v\in B}d_S(v)<kd_L')+n\P(d_S(v)>d_R')\\
&\le \exp(-k\omega^{1-2\theta}\log n ) 
\end{align*}
By union bound, the probability that there is a bad set $S$ is then at most $O(n^{-\omega^{1-2\theta}})$.
\end{proof}

\section{Proof of Theorem \ref{mainthm}} We use the comparison method.   
 A key ingredient of the proof is the following concentration lemma, which may be of independent interest. 

\begin{lem}\label{thm:ESD-con-general} Let $M$ be a $(m+n)\times (m+n)$ Hermitian random matrix of the form
\begin{equation*}
M_n=
\begin{pmatrix}
0&X\\
X^{T}&0
\end{pmatrix}
\end{equation*}

 where $X$ be $m\times n$ random matrix whose entries $\xi_{ij}$ are i.i.d. random variables with mean zero, variance 1 and $|\xi_{ij}|< K$ for some common constant $K$. Fix $\delta>0$ and assume that the eighth moment  $M_8:=\sup_{i,j}\E(|\xi_{ij}|^8)<\infty$. Then for any interval $I\subset \R$ avoiding $\{0\}$  whose length is at least $\Omega(\delta^{-1/2}(M_8^{1/8}n^{-1/4})$, there is a constant $c>0$ such that the number $N_I$ of the eigenvalues of $\frac{1}{\sqrt{n}}M$ which belong to $I$ satisfies the following concentration inequality
$$\P(|N_I-n\mu(I)|>\delta n\mu(I))\le 4\exp(-c\frac{\delta^3n^2|I|^4}{K^2}),$$
where $\mu$ is the limiting distribution defined by (\ref{def.mu}).
\end{lem}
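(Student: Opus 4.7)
The plan is to sandwich the sharp counter $N_I$ between two smoothed linear eigenvalue statistics of the form $\operatorname{tr}f_\pm(M/\sqrt n)$, and to treat each via a two-step analysis: a concentration inequality for the smoothed trace around its mean, and a Marchenko--Pastur-type bound for the mean itself. Fix a smoothing scale $\eta:=c_0\delta|I|$ with $c_0$ a sufficiently small absolute constant, and take continuous piecewise-linear $f_-,f_+\colon\R\to[0,1]$ satisfying $f_-\le\chi_I\le f_+$, equal to $\chi_I$ outside the $\eta$-neighbourhood of $\partial I$ and with $\|f_\pm'\|_\infty\le 1/\eta$. Because $I$ avoids $0$ and the density $q$ in \eqref{def.mu} is bounded there, the smoothing error satisfies $n\int(f_+-f_-)\,d\mu=O(n\eta)\le\tfrac{\delta}{3}n\mu(I)$, so that $n\int f_\pm\,d\mu$ agrees with $n\mu(I)$ up to $\tfrac{\delta}{3}n\mu(I)$.

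For the concentration step, I view $\operatorname{tr}f_\pm(M/\sqrt n)$ as a function of the $mn$ independent entries $\xi_{ij}$, each bounded by $K$. A single-entry change perturbs $M$ by a Hermitian rank-$2$ matrix of Frobenius norm $O(K)$, and since $f_\pm$ is $(1/\eta)$-Lipschitz the nuclear-norm trace inequality gives
\[
|\operatorname{tr}f_\pm(M/\sqrt n)-\operatorname{tr}f_\pm(M'/\sqrt n)|\le(1/\eta)\,\|M-M'\|_*/\sqrt n = O\!\left(\tfrac{K}{\eta\sqrt n}\right).
\]
Applying McDiarmid's bounded-differences inequality (or, slightly more efficiently, a Talagrand/log-Sobolev-style inequality exploiting the fact that $X\mapsto\operatorname{tr}f_\pm(M(X)/\sqrt n)$ is $O(1/\eta)$-Lipschitz in the Frobenius norm on $[-K,K]^{mn}$) then yields, for either sign,
\[
\P\big(|\operatorname{tr}f_\pm(M/\sqrt n)-\E\operatorname{tr}f_\pm(M/\sqrt n)|\ge\tfrac{\delta}{3}n\mu(I)\big)\le 2\exp\!\Big(-c\,\tfrac{\delta^3 n^2|I|^4}{K^2}\Big),
\]
after plugging in $\eta=c_0\delta|I|$ and using $\mu(I)=\Theta(|I|)$ (since $I$ is bounded away from the singularity at $0$).

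For the bias step, I would bound $|\E\operatorname{tr}f_\pm(M/\sqrt n)-n\int f_\pm\,d\mu|$ via a quantitative Marchenko--Pastur law for Wishart-type matrices with entries of mean $0$ and variance $1$. The finite eighth moment $M_8$ is used here: a variance estimate on moments of $\operatorname{tr}(X^TX/n)^k$, obtained by the standard bipartite-graph moment expansion, produces a Kolmogorov-distance bound of order $M_8^{1/8}n^{-1/4}$ between $\E\mu_n$ and $\mu$. Integrating this deviation against $f_\pm$, whose total variation equals $2$ and whose derivative has sup-norm $1/\eta$, gives a bias of order $n\cdot M_8^{1/8}n^{-1/4}/\eta$, which is $\le\tfrac{\delta}{3}n\mu(I)$ precisely when $|I|\ge C\delta^{-1/2}M_8^{1/8}n^{-1/4}$. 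Combining the three estimates via the sandwich $\operatorname{tr}f_-(M/\sqrt n)\le N_I\le\operatorname{tr}f_+(M/\sqrt n)$ and a union bound over the two test functions produces the claimed $4\exp(-c\delta^3n^2|I|^4/K^2)$ bound.

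The hardest step is the bias analysis: producing an $O(M_8^{1/8}n^{-1/4})$ Kolmogorov-distance estimate between the expected spectral distribution $\E\mu_n$ and the Marchenko--Pastur law for a bipartite matrix with bounded but otherwise arbitrary entries. This is what dictates both the lower bound on $|I|$ and the specific role of $M_8$, and is the technical heart of the lemma. The concentration half is by comparison a rather standard Frobenius-Lipschitz calculation, once the smoothing scale $\eta\sim\delta|I|$ has been chosen to balance the fluctuation against the bias.
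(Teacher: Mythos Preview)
Your overall architecture---sandwich $N_I$ between two smoothed linear statistics, concentrate each around its mean, and control the mean with a quantitative Marchenko--Pastur estimate---is exactly what the paper does. The bias step is also handled in the paper by quoting the G\"otze--Tikhomirov bound (their Lemma~\ref{lemGT}) as a black box, so your ``hardest step'' is in fact already available in the literature and need not be redone via moment expansions.

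The genuine gap is in the concentration step. Your tent functions $f_\pm$ are \emph{not convex}, and this matters. With the bounded-difference estimate you wrote, $c_{ij}=O(K/(\eta\sqrt n))$ over $mn\asymp n^2$ independent coordinates, McDiarmid gives an exponent of order
\[
\frac{T^2}{\sum_{i,j} c_{ij}^2}\;\asymp\;\frac{T^2\eta^2}{nK^2},
\]
so with $T\asymp \delta n\mu(I)$ and $\eta\asymp\delta|I|$ you obtain $\exp(-c\,\delta^4 n|I|^4/K^2)$, which is weaker than the claimed $\exp(-c\,\delta^3 n^2|I|^4/K^2)$ by a full factor of $n$ in the exponent (and an extra $\delta$). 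The ``more efficient'' alternative you allude to, Talagrand's convex-distance inequality on $[-K,K]^{mn}$, does recover the missing $n$, but it requires the functional to be convex in the entries; for $f_\pm$ a tent function, $X\mapsto\operatorname{tr}f_\pm(M(X)/\sqrt n)$ is not convex, and a log-Sobolev inequality is not available under mere boundedness of the entries.

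The paper's device for this is precisely to avoid non-convex test functions: it writes the smoothed indicator as a \emph{difference} $f_1-f_2$ (and $g_1-g_2$ for the lower bound), where each $f_j,g_j$ is individually convex and $(C/|I|)$-Lipschitz, and applies the Guionnet--Zeitouni inequality (Lemma~\ref{GZconcentration}, which is Talagrand for convex Lipschitz spectral functionals) separately to $X_j=\operatorname{tr}f_j(M/\sqrt n)$. This yields $\P(|X_j-\E X_j|\ge T)\le 4\exp(-cT^2|I|^2/(K^2C^2))$ with no $n$ in the denominator, and after subtraction one controls $X_1-X_2$, which plays the role of your $\operatorname{tr}f_+(M/\sqrt n)$. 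If you rewrite each of your $f_\pm$ in this way, your argument goes through along the paper's lines.
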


Apply Lemma  \ref{thm:ESD-con-general} for the normalized adjacency matrix  of $G(m,n,p)$ 
$$M=\frac{1}{\sqrt{p(1-p)}}\left[B-p\begin{pmatrix}0&J\\J^T&0\end{pmatrix}\right]$$
with $K=1/\sqrt{p}$  we obtain 

\begin{cor}\label{cor:ESDconvrate} Let $\delta>0$ and $N_I$ be the number of eigenvalues of $M$ inside interval $I$ avoiding $\{0\}$ with length at least $\big(\frac{\log(np)}{\delta^3(np)^{1/2}}\big)^{1/4}$, there is a constant $c>0$ so that
$$|N_I-n\mu(I)|\ge \delta n\mu(I)$$
with probability at most $\exp(-cn(np)^{1/2}\log(np))$.
\end{cor}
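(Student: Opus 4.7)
The plan is to apply Lemma \ref{thm:ESD-con-general} directly to the normalized adjacency matrix $M$ of $G(m,n,p)$ (after the implicit rescaling by $1/\sqrt{n}$) and to verify that the length hypothesis and the tail exponent reduce to the quantitative forms stated in the corollary.

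First I would compute the parameters that feed into the lemma. The entries $\xi_{ij}=(y_{ij}-p)/\sqrt{p(1-p)}$, where $y_{ij}$ is Bernoulli($p$), have mean zero, variance one and satisfy $|\xi_{ij}|\le 1/\sqrt{p(1-p)}$, so the constant $K$ of the lemma may be taken as $K=1/\sqrt{p(1-p)}=\Theta(1/\sqrt{p})$ (we may assume $p$ is bounded away from $1$, since the interesting regime is $p\to 0$). A direct expansion of the centered Bernoulli yields
\[
\E|\xi_{ij}|^{8}=\frac{p(1-p)^{8}+(1-p)p^{8}}{(p(1-p))^{4}}=O(p^{-3}),
\]
so the eighth-moment constant satisfies $M_{8}=O(p^{-3})$ and hence $M_{8}^{1/8}=O(p^{-3/8})$.

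Next I would verify that the length threshold imposed by the corollary is strong enough to trigger the lemma. The lemma requires $|I|\gtrsim \delta^{-1/2}M_{8}^{1/8}n^{-1/4}=O(\delta^{-1/2}p^{-3/8}n^{-1/4})$, whereas the corollary demands $|I|\ge \bigl(\log(np)/(\delta^{3}(np)^{1/2})\bigr)^{1/4}=\delta^{-3/4}(\log(np))^{1/4}n^{-1/8}p^{-1/8}$. A direct comparison of the exponents of $n$, $p$ and $\delta$, using $np=\Omega(\log n)$ and the convention that $\delta$ stays bounded, confirms that the corollary's threshold dominates the lemma's, so Lemma \ref{thm:ESD-con-general} applies.

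Finally, I would substitute $K^{2}=\Theta(1/p)$ and the lower bound $|I|^{4}\ge \log(np)/(\delta^{3}(np)^{1/2})$ into the tail estimate produced by the lemma:
\[
\frac{\delta^{3}n^{2}|I|^{4}}{K^{2}}\;\gtrsim\;\delta^{3}n^{2}p\cdot\frac{\log(np)}{\delta^{3}(np)^{1/2}}\;=\;n^{3/2}p^{1/2}\log(np)\;=\;n(np)^{1/2}\log(np),
\]
so Lemma \ref{thm:ESD-con-general} delivers $\P(|N_{I}-n\mu(I)|>\delta n\mu(I))\le 4\exp(-cn(np)^{1/2}\log(np))$, which is exactly the claimed bound once the factor $4$ is absorbed into $c$. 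The only real obstacle here is the careful bookkeeping of the exponents of $p$, $n$ and $\delta$; once $M_{8}$ and $K$ are pinned down, the proof is essentially direct substitution.
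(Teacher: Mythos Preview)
Your approach is exactly what the paper does: it writes ``Apply Lemma~\ref{thm:ESD-con-general} \ldots\ with $K=1/\sqrt{p}$'' and states the corollary without further detail. Your identification of $K=\Theta(p^{-1/2})$ and $M_8=\Theta(p^{-3})$, and your substitution showing that $\delta^{3} n^{2}|I|^{4}/K^{2}\gtrsim n(np)^{1/2}\log(np)$ at the stated threshold, are correct and make explicit what the paper leaves implicit.

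There is, however, a real gap in your verification that the corollary's length hypothesis implies the lemma's. With $M_{8}^{1/8}=\Theta(p^{-3/8})$ the lemma needs $|I|\gtrsim \delta^{-1/2}p^{-3/8}n^{-1/4}$, whereas the corollary only assumes $|I|\ge \delta^{-3/4}(\log(np))^{1/4}(np)^{-1/8}$; the ratio of the second to the first is
\[
\Theta\bigl(\delta^{-1/4}(\log(np))^{1/4}(np^{2})^{1/8}\bigr),
\]
and $np^{2}\ge 1$ is equivalent to $np\ge n^{1/2}$. Thus your ``direct comparison of the exponents'' only goes through when $d_L=np\gtrsim n^{1/2}$; in the range $\omega(\log n)\le np\le o(n^{1/2})$ the corollary's hypothesis is strictly weaker than the lemma's, and for $np=o(n^{1/3})$ the lemma's threshold $\delta^{-1/2}p^{-3/8}n^{-1/4}$ even exceeds any fixed constant, so the lemma is vacuous there. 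The paper glosses over this point too, so the gap is inherited rather than introduced by you, but the sentence asserting that the comparison ``confirms that the corollary's threshold dominates the lemma's'' is not correct as written and should be restricted to the regime $np\gtrsim n^{1/2}$.
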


By Corollary \ref{cor:ESDconvrate} and Lemma \ref{lem1}, the probability that $N_I$ fails to be close to the expected value in the model $G(m,n,p)$ is much smaller than the probability that $G(m,n,p)$ is $(mp,np)$-regular. Thus the probability that $N_I$ fails to be close to the expected value in the model $G_{m,n,d_L,d_R}$ where $d_L=np$, $d_R=mp$ is the ratio of the two former probabilities, which is $O(\exp(-cn\sqrt{np}\log np))$ for some small positive constant $c$. Thus, Theorem \ref{mainthm} is proved, depending on Lemma \ref{thm:ESD-con-general} which we turn to next. 

\subsection
{Proof of Lemma  \ref{thm:ESD-con-general}} 

Assume $I=[a,b]$ where $a<b<0$ or $0<a<b$. 

We will use the approach of Guionnet and Zeitouni in \cite{GZ}. Consider a random Hermitian matrix $W_n$ with independent entries $(W_n)_{ij}=A_{ij}w_{ij}$ where
\begin{itemize}
\item $A=(A_{ij})$ is a deterministic matrix of the form
\begin{equation*}A=
\begin{pmatrix}
0&J\\
J^{T}&0
\end{pmatrix}
\end{equation*}
with $J$ be the $m\times n$ all 1 matrix.
\item $w_{ij}$'s are iid copies of a random variable $w$ with mean zero, variance one, support in a compact region $S$. Moreover $w$ is bounded by a constant $K$. 
\end{itemize}

Let $f$ be a real convex $L$-Lipschitz function and define
$$Z:=\sum_{i=1}^nf(\lambda_i)$$
where $\lambda_i$'s are the eigenvalues of $\frac{1}{\sqrt{n}}W_n$. We are going to view $Z$ as the function of the variables $w_{ij}$. For our application we need $w_{ij}$ to be random variables with mean zero and variance 1, whose absolute values are bounded by a common constant $K$ ($K$ may depend on $n$).

The following concentration inequality is a version of Theorem 1.1 in \cite{GZ}.

\begin{lem}\label{GZconcentration} Let $W_n, f, Z$ be as above. Then there is a constant $c>0$ such that for any $T>0$
$$\P(|Z-\E(Z)|\ge T)\le 4\exp(-c\frac{T^2}{K^2L^2}).$$
\end{lem}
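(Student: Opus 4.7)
The plan is to follow the original strategy of Guionnet and Zeitouni in \cite{GZ}: realize $Z$ as a convex Lipschitz function of the independent scalar entries $w_{ij}$ and apply Talagrand's concentration inequality for convex Lipschitz functions of bounded independent random variables. Write $Z = \operatorname{tr}(f(n^{-1/2}W_n))$, so that $Z = g(w)$ is a function of the vector $w=(w_{ij})_{1\le i\le m,\,1\le j\le n}$ whose coordinates are i.i.d.\ and satisfy $|w_{ij}|\le K$.

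First I would verify convexity of $g$. Since $A$ is deterministic, the map $w\mapsto n^{-1/2}W_n$ is linear into the space of Hermitian matrices, so it suffices to invoke Klein's lemma: for convex $f$, the map $H\mapsto \operatorname{tr}(f(H))$ is convex on the Hermitian matrices. Composing with the linear map above gives convexity of $g$ as a function of $w$.

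Next I would establish a Lipschitz bound on $g$ in the Euclidean norm on $w$. The Hoffman--Wielandt inequality gives $\sum_i(\lambda_i - \lambda'_i)^2 \le n^{-1}\|W_n - W'_n\|_{HS}^2$, and since $A_{ij}\in\{0,1\}$ we have $\|W_n - W'_n\|_{HS}\le \|w-w'\|_2$. Combining the $L$-Lipschitz property of $f$ with Cauchy--Schwarz over the $n+m$ eigenvalues of $n^{-1/2}W_n$,
$$|g(w) - g(w')| \;\le\; L\sum_i|\lambda_i - \lambda'_i| \;\le\; L\sqrt{(n+m)/n}\,\|w-w'\|_2,$$
so, since $m/n$ is bounded, $g$ is Lipschitz on $\mathbb{R}^{mn}$ with constant $L' = O(L)$.

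Talagrand's convex concentration inequality for independent bounded coordinates then yields $\P(|Z - \operatorname{med}(Z)|\ge T)\le 4\exp(-cT^2/(K^2L^2))$; integrating this tail bound shows $|\E Z - \operatorname{med}(Z)| = O(KL)$, which can be absorbed into the constant $c$ and converts the bound to one centered at $\E Z$. The only subtlety is bookkeeping: one must feed Talagrand his Euclidean Lipschitz constant on the coordinates $w_{ij}$ (rather than the Hilbert--Schmidt Lipschitz constant on $W_n$), and verify that the boundedness hypothesis $|w_{ij}|\le K$ matches what Talagrand requires. Both points are secured by the observations above, so no genuine obstacle stands in the way.
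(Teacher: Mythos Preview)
Your proposal is correct and is precisely the Guionnet--Zeitouni argument the paper invokes by citation; the paper itself does not supply a proof of this lemma, merely labels it ``a version of Theorem~1.1 in \cite{GZ}''. One cosmetic slip: since $W_n$ carries both the $X$ and $X^{T}$ blocks, one has $\|W_n-W'_n\|_{HS}=\sqrt{2}\,\|w-w'\|_2$ rather than $\le\|w-w'\|_2$, but the stray $\sqrt{2}$ is absorbed into your $O(L)$ Lipschitz constant and ultimately into $c$.
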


In order to apply Lemma \ref{GZconcentration} for $N_I$ and $M$, it is natural to consider 
$$Z:=N_I=\sum_{i=1}^n \chi_I(\lambda_i)$$
where $\chi_I$ is the indicator function of $I$ and $\lambda_i$ are the eigenvalues of $\frac{1}{\sqrt{n}}M_n$. However, this function is neither convex nor Lipschitz. As suggested in \cite{GZ}, one can  overcome this problem by a proper approximation.  Define $I_l=[a-\frac{|I|}{C},a]$, $I_r=[b,b+\frac{|I|}{C}]$, where $C$ is a constant to be chosen later, and construct two real functions $f_1, f_2$ as follows(see Figure \ref{fig:fg}): 
\begin{equation*}
f_1(x)=\Bigg\{
\begin{array}{ll}
-\frac{C}{|I|}(x-a)-1& \text{if }x\in (-\infty, a-\frac{|I|}{C})\\
0&\text{if }x\in I\cup I_l\cup I_r\\
\frac{C}{|I|}(x-b)-1& \text{if }x\in (b+\frac{|I|}{C},\infty)
\end{array}
\end{equation*}

\begin{equation*}
f_2(x)=\Bigg\{
\begin{array}{ll}
-\frac{C}{|I|}(x-a)-1& \text{if }x\in (-\infty, a)\\
-1&\text{if }x\in I\\
\frac{C}{|I|}(x-b)-1& \text{if }x\in (b,\infty)
\end{array}
\end{equation*}

\begin{figure} [htbp]
  \centering 
  \includegraphics[scale=0.52]{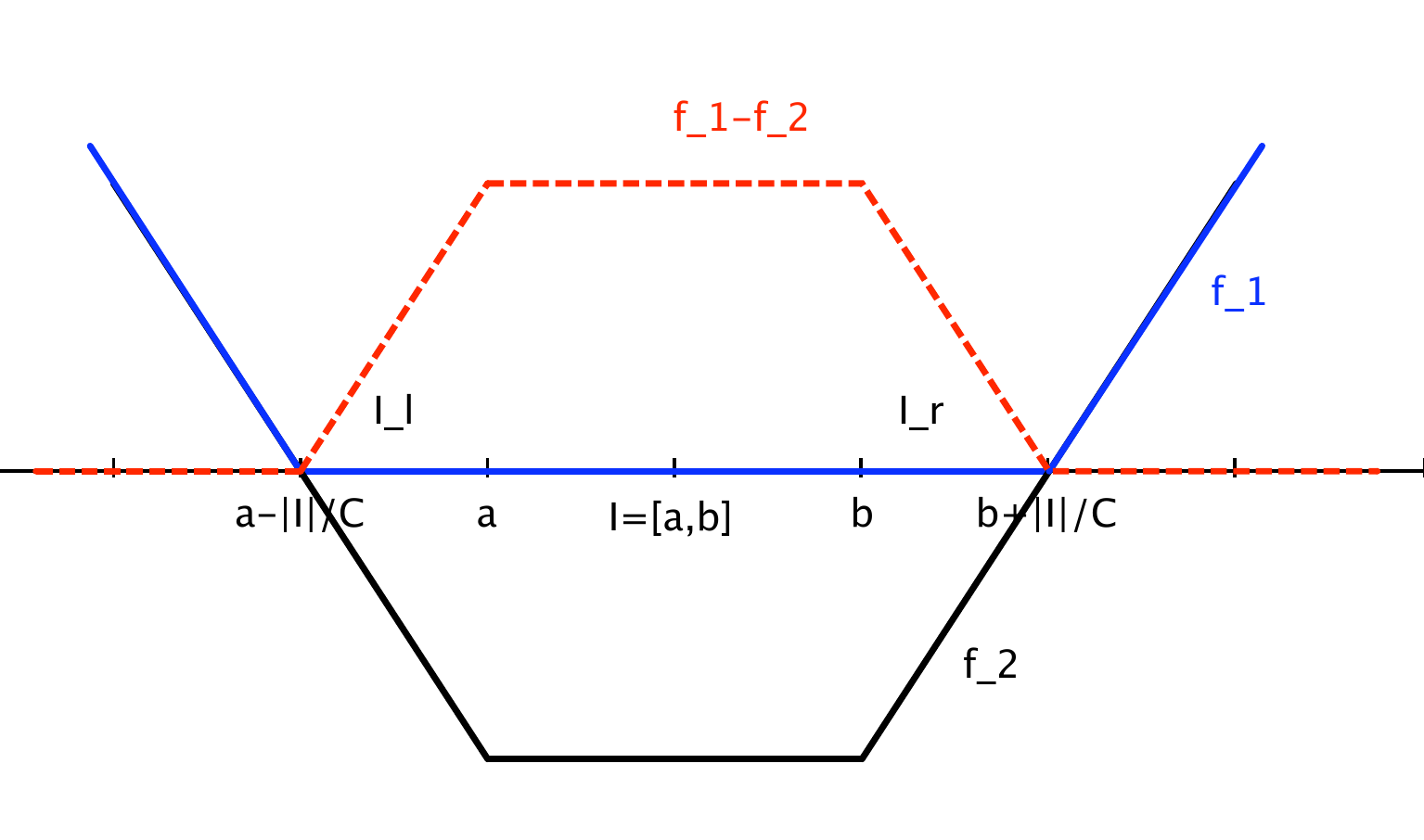}%
  \hfill{}%
  \includegraphics[scale=0.52]{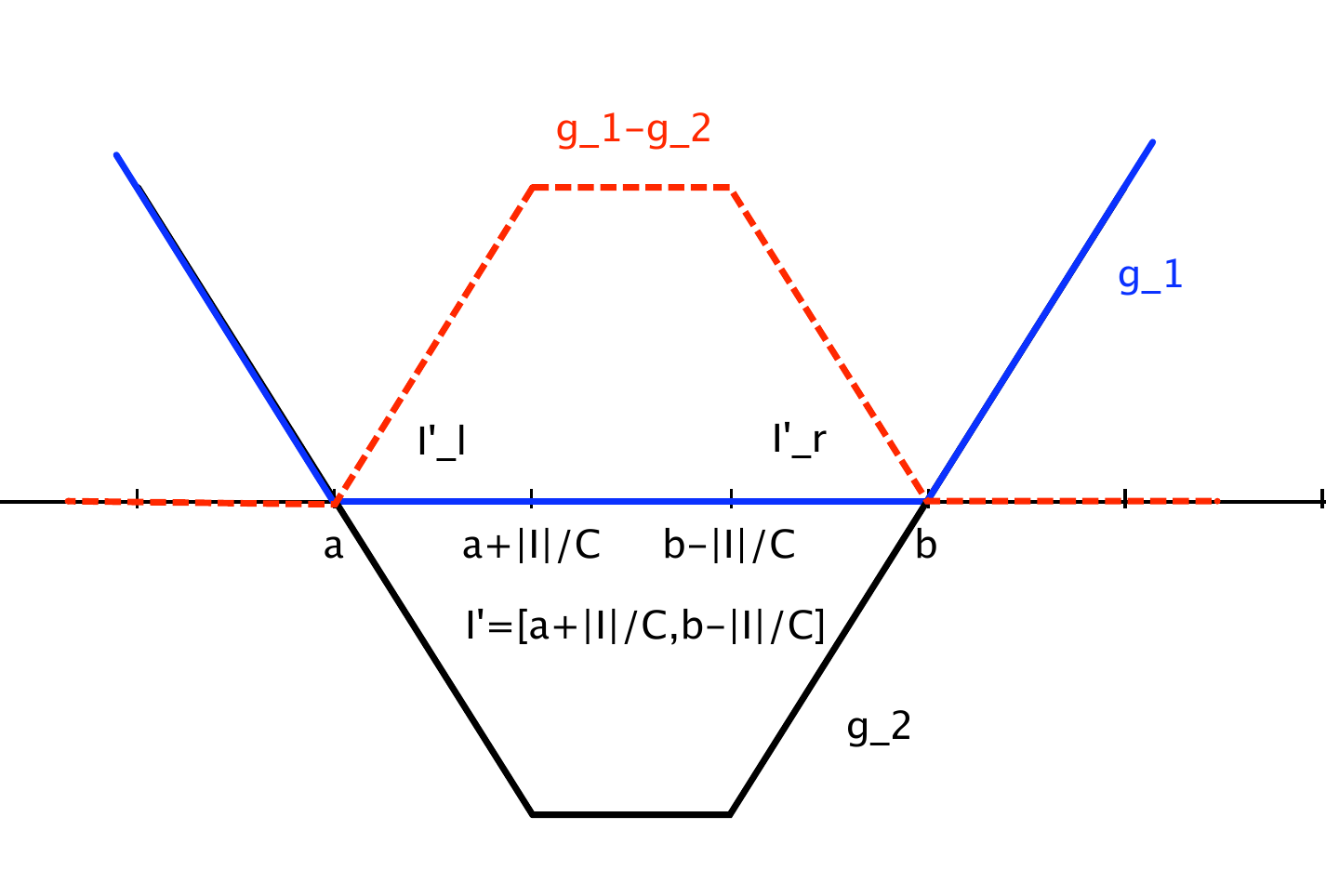} 
  \caption{Auxiliary functions used in the proof} 
  \label{fig:fg}
\end{figure}

Note that $f_j$'s are convex and $\frac{C}{|I|}$-Lipschitz. Define $$X_1=\sum_{i=1}^n f_1(\lambda_i),\ X_2=\sum_{i=1}^n f_2(\lambda_i)$$ and  apply Lemma \ref{GZconcentration} with $T=\frac{\delta}{8} n\mu(I)$ for $X_1$ and $X_2$. Thus, we have
\begin{align*}
\P(|X_j-\E(X_j)|\ge \frac{\delta}{8} n\mu(I))&\le4\exp(-c\frac{\delta^2n^2|I|^2(\mu(I))^2}{K^2C^2}).\\
\end{align*}
Direct calculation shows that for $I$ in the support of $\mu$ one have $\mu(I)\le\alpha|I|^2$ for some absolute constant $\alpha$. Thus we have for $j=1,2$
$$\P(|X_j-\E(X_j)|\ge \frac{\delta}{8} n\mu(I))\le4\exp(-c_1\frac{\delta^2n^2|I|^4}{K^2C^2})$$

 Let $X=X_1-X_2$, then 
$$\P(|X-\E(X)|\ge \frac{\delta}{4} n\mu(I))\le O(\exp(-c_1\frac{\delta^2n^2|I|^4}{K^2C^2})).$$

Now we compare $X$ to $Z$, making use of the following result about convergence rate for Marchenko - Pastur law by G\"otze and Tikhomirov .

\begin{lem}[\cite{GT-MP}Theorem 1.1]\label{lemGT} Let $W_n=(\omega_{ij})$ be a $m\times n$ random matrices whose entries are independent with mean zero and variance one, and $M_8=\sup_{i,j}\E(|\omega_{ij}|^8)<\infty$. Then for any $I\subset\R$ the number $N'_I$ of eigenvalues of $\frac{1}{\sqrt{n}}W_n^{T}W_n$ inside $I$ satisfies
$$|\E(N'_I)-n\mu_{MP}(I)|<\beta' n\frac{M_8^{1/4}}{\sqrt{n}},$$
where $\beta'$ is an absolute constant.
\end{lem}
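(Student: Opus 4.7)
The strategy is the classical Stieltjes transform and smoothing method for proving Kolmogorov-type convergence rates for sample covariance matrices. Write $\alpha_n=m/n$ and let
\[
s_n(z) := \tfrac{1}{n}\,\mathrm{tr}\bigl(\tfrac{1}{n}W_n^T W_n - zI\bigr)^{-1}, \qquad s(z):=\int\frac{d\mu_{MP}(x)}{x-z},\quad z=u+iv,\ v>0,
\]
be the Stieltjes transforms of the empirical and limiting spectral distributions. The goal is a quantitative bound of the form $|\E s_n(z)-s(z)|\lesssim M_8^{1/4}/(\sqrt n\,v^{c})$ on the strip $v\ge n^{-1/2}$, converted at the end via a smoothing (inverse-Stieltjes) inequality into a bound on the expected counting function.

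The first step is algebraic. Using the Schur complement to express each diagonal entry $G_{kk}(z)$ of the resolvent $G(z)=(\tfrac1n W_n^T W_n - zI)^{-1}$, averaging and collecting terms, produces a perturbed Silverstein / Marchenko--Pastur identity
\[
z\alpha_n\,s_n(z)^2 + \bigl(z-(1-\alpha_n)\bigr)\,s_n(z) + 1 \;=\; \mathcal E_n(z),
\]
of which $s(z)$ is the solution when $\mathcal E_n\equiv 0$. The remainder $\mathcal E_n(z)$ is a sum over $k$ of fluctuations $\tfrac1n w_k^T G_{(k)}(z) w_k - \tfrac1n\,\mathrm{tr}\,G_{(k)}(z)$ of quadratic forms in the columns $w_k$ of $W_n$ (with $G_{(k)}$ the resolvent of the $(n-1)$-column submatrix), plus harmless rank-one corrections of order $O(1/(nv))$.

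The probabilistic heart of the argument is a moment estimate on those fluctuations. Under $M_8<\infty$, a decoupling / Rosenthal-type bound yields
\[
\E\Bigl|\tfrac1n w_k^T G_{(k)}(z) w_k - \tfrac1n\,\mathrm{tr}\,G_{(k)}(z)\Bigr|^4 \;\lesssim\; \frac{M_8}{n^2\,v^{8}},
\]
so that Jensen's inequality and summation in $k$ give $\E|\mathcal E_n(z)|\lesssim M_8^{1/4}/(\sqrt n\,v^{2})$. Because the Silverstein equation is stable off the support of $\mu_{MP}$ (the derivative of its left-hand side in $s$ has modulus $\gtrsim v$ on the relevant range), a standard perturbative inversion upgrades this into $|\E s_n(z)-s(z)|\lesssim M_8^{1/4}/(\sqrt n\,v^{c})$ for some fixed $c>0$. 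Finally, Bai's smoothing inequality,
\[
\sup_x|F(x)-G(x)| \;\le\; C_1\!\int_{-A}^{A}\!|s_F(u+iv)-s_G(u+iv)|\,du + C_2 v + C_3\sup_x\!\int_{|y|\le v}\!|G(x+y)-G(x)|\,dy,
\]
applied with $F=\E\mu_n$, $G=\mu_{MP}$ and optimized by choosing $v\asymp M_8^{1/4}/\sqrt n$, yields Kolmogorov distance $O(M_8^{1/4}/\sqrt n)$ between $\E\mu_n$ and $\mu_{MP}$; multiplying by $n$ gives the lemma uniformly in $I$.

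The main obstacle is the probabilistic step: getting the sharp exponent $1/4$ on $M_8$, rather than the $1/2$ that a naive $L^2$-Chebyshev argument would yield, forces one to bound $\mathcal E_n(z)$ in $L^1$ via an $L^4$ estimate followed by a fourth root, which is the only place where the full $M_8$ hypothesis is essentially exploited. Simultaneously, the $v$-dependence of the intermediate bounds must be tracked tightly, since the smoothing step is invoked at the borderline scale $v\sim n^{-1/2}$ and tolerates only a bounded number of lost powers of $v$; any slack in the quadratic-form bound or in the stability estimate propagates and can destroy the final $1/\sqrt n$ rate.
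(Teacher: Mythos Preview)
The paper does not prove this lemma at all: it is quoted verbatim as Theorem~1.1 of G\"otze--Tikhomirov \cite{GT-MP} and used as a black box in the proof of Lemma~\ref{thm:ESD-con-general}. So there is no in-paper argument to compare your proposal against.

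That said, your plan is exactly the skeleton of the G\"otze--Tikhomirov proof in the cited reference: resolvent/Schur expansion leading to a perturbed Marchenko--Pastur self-consistent equation, high-moment (here eighth-moment) control of the quadratic-form fluctuations $\tfrac{1}{n}w_k^TG_{(k)}w_k-\tfrac{1}{n}\mathrm{tr}\,G_{(k)}$, stability of the quadratic equation to convert $\E|\mathcal E_n|$ into $|\E s_n-s|$, and Bai's smoothing inequality at scale $v\asymp M_8^{1/4}n^{-1/2}$ to pass to the Kolmogorov distance. Your remark that the exponent $1/4$ on $M_8$ is forced by taking an $L^4$ bound on the fluctuation and then a fourth root is the correct diagnosis of where the hypothesis $M_8<\infty$ enters. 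The only places where your sketch is loose are the stability step (the bound ``derivative $\gtrsim v$'' is not quite the right mechanism near the spectral edge; G\"otze--Tikhomirov handle the edge via a separate argument and an integrated version of the smoothing inequality) and the bookkeeping of $v$-powers, which in the actual proof requires an iteration rather than a single application. But as a plan this is the right route, and it coincides with the source the paper cites.
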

Since $\mu$ is a scaled down copy of $\mu_MP$, the same convergence rate (with another constant) holds for our case
$$|\E(N_I)-n\mu(I)|<\beta M_8^{1/4}n^{1/2},$$

We have $\E(X-Z)\le \E(N_{I_l}+N_{I_r})$. Thus by Lemma \ref{lemGT}
$$\E(X)\le\E(Z)+n(\mu(I_l)+\mu(I_r))+\beta M_8^{1/4}n^{1/2}.$$
Choose $C=(4/\delta)^{1/2}$, then because $|I|\ge \Omega(\delta^{-1/2}(M_8^{1/8}n^{-1/4})$,
$$n(\mu(l_l)+\mu(I_r))=\Theta(n(\frac{|I|}{C})^{2})>\Omega( M_8^{1/4}n^{-1/2})$$
and
$$n(\mu(I_l)+\mu(I_r))+\beta M_8^{1/4}n^{1/2}=\Theta(n(\frac{|I|}{C})^{2})=\Theta(\frac{\delta}{4}n\mu(I)).$$

Therefore, with probability at least $1-O(\exp(-c_1\frac{\delta^4n^2|I|^4}{K^2}))$, we have
$$Z\le X\le \E(X)+ \frac{\delta}{4}n\mu(I) < \E(Z)+\frac{\delta}{2} n\mu(I).$$
Lemma \ref{lemGT} again gives
$$\E(N_I)<n\mu(I)+ \beta M_8^{1/4}n^{1/2}<(1+\frac{\delta}{2})n\mu(I),$$
hence with probability at least $1-O(\exp(-c_1\frac{\delta^3n^2|I|^4}{K^2}))$
$$N_I<(1+\delta) n\mu(I),$$
which is the desires upper bound.

The lower bound is proved using a similar argument. Let $I'=[a+\frac{|I|}{C}, b-\frac{|I|}{C}]$, $I'_l=[a,a+\frac{|I|}{C}]$, $I'_r=[b-\frac{|I|}{C},b]$ where $C$ is to be chosen later and define two functions $g_1$, $g_2$ as follows (see Figure \ref{fig:fg}):

\begin{equation*}
g_1(x)=\Bigg\{
\begin{array}{ll}
-\frac{C}{|I|}(x-a)& \text{if }x\in (-\infty, a)\\
0&\text{if }x\in I'\cup I'_l\cup I'_r\\
\frac{C}{|I|}(x-b)& \text{if }x\in (b,\infty)
\end{array}
\end{equation*}

\begin{equation*}
g_2(x)=\Bigg\{
\begin{array}{ll}
-\frac{C}{|I|}(x-a)& \text{if }x\in (-\infty, a+\frac{|I|}{C})\\
-1&\text{if }x\in I'\\
\frac{C}{|I|}(x-b)& \text{if }x\in (b-\frac{|I|}{C},\infty)
\end{array}
\end{equation*}

Define $$Y_1=\sum_{i=1}g_1(\lambda_i),\ Y_2=\sum_{i=1}g_2(\lambda_i).$$ A similar argument using Lemma \ref{GZconcentration}  and Lemma \ref{lemGT} with $Y_1$, $Y_2$ in place of $X_1$, $X_2$ shows that  with probability  at least $1-O(\exp(-c_2\frac{\delta^3n^2|I|^4}{K^2C^2}))$
$$N_I>(1-\delta)n\mu(I).$$
Thus, Lemma \ref{thm:ESD-con-general} is proved.
\endproof


\bibliographystyle{plain}
\bibliography{random-matrices}

\begin{thebibliography}{1}

\bibitem{DJ13}
I.~Dumitriu and T.~Johnson.
\newblock The marcenko-pastur law for sparse random bipartite biregular graphs.
\newblock {\em arXiv:1304.4907}, 2013.

\bibitem{EKYY2}
L.~Erd{\H o}s, A.~Knowles, H-T. Yau, and J.~Yin.
\newblock Spectral statistics of erd{\H o}s-r{\'e}nyi graphs ii: Eigenvalue
  spacing and the extreme eigenvalues.
\newblock {\em Communications in Mathematical Physics Vol. 314, Issue 3, pp
  587-640}, 2012.

\bibitem{EKYY1}
L.~Erd{\H o}s, A.~Knowles, H-T. Yau, and J.~Yin.
\newblock Spectral statistics of erd{\H o}s--r{\'e}nyi graphs i: Local
  semicircle law.
\newblock {\em Ann. Probab. Volume 41, Number 3B, 2279-2375.}, 2013.

\bibitem{GT-MP}
F.~G{\"o}tze and A.~Tikhomirov.
\newblock Rate of convergence in probability to the marchenko-pastur law.
\newblock {\em Bernoulli Volume 10, Number 3, 503-548.}, 2004.

\bibitem{GZ}
A.~Guionnet and O.~Zeitouni.
\newblock {Concentration of the spectral measure for large matrices}.
\newblock {\em Electron. Comm. Probab}, 5:119--136, 2000.

\bibitem{SU}
E~Shamir and E~Upfal.
\newblock {Large regular factors in random graphs}.
\newblock {\em Convexity and graph theory (Jerusalem, 1981)}, 1981.

\bibitem{TVW}
Linh~V. Tran, Van~H. Vu, and Ke~Wang.
\newblock Sparse random graphs: Eigenvalues and eigenvectors.
\newblock {\em Random Structures and Algorithms Vol. 42, Issue 1, page
  110-134}, 2013.

\end{thebibliography}

\end{document}